\documentclass[12pt,a4paper]{article}
\usepackage{amsmath,amsthm,amssymb,mathtools,color}

\usepackage[utf8]{inputenc}
\usepackage[english]{babel}

\usepackage{enumerate}
\usepackage{url}
\usepackage[hidelinks]{hyperref}
\usepackage{upref}

\usepackage[a4paper,margin=2.5cm]{geometry}

\numberwithin{equation}{section}
\theoremstyle{plain}
\newtheorem{theorem}{Theorem}[section]

\newtheorem{lemma}[theorem]{Lemma}
\newtheorem{corollary}[theorem]{Corollary}

\theoremstyle{definition}

\newtheorem{remark}[theorem]{Remark}
\newtheorem{example}[theorem]{Example}

\newcommand{\abs}[1]{\lvert{#1}\rvert}  
\newcommand{\norm}[1]{\lVert{#1}\rVert} 
\newcommand{\ip}[2]{\langle{#1},{#2}\rangle} 

\DeclareMathOperator{\re}{Re} 

\DeclareMathOperator{\dist}{dist}
 
\newcommand{\any}{\mkern1mu\cdot\mkern1mu}
\newcommand{\bdz}{B_{\delta}(z)}
\newcommand{\bddz}{B_{\delta'}(z)}
\newcommand{\cO}{\mathcal{O}}
\newcommand{\cH}{\mathcal{H}}

\newcommand{\bC}{\mathbf{C}}

\newcommand{\bN}{\mathbf{N}}
\newcommand{\bZ}{\mathbf{Z}}

\newcommand{\eps}{\varepsilon}

\DeclareMathOperator{\Arg}{arg}
\newcommand{\e}{\textup{e}}

\newcommand{\be}[1]{\mathsf{e}_{#1}}

\title{On the directional growth of the resolvent norm}
\author{
Horia Cornean\footnote{Department of Mathematical Sciences, Aalborg University, Thomas Manns Vej 23, DK-9220 Aalborg~\O{}, Denmark. Email: cornean@math.aau.dk and matarne@math.aau.dk},\;
Henrik Garde\footnote{Department of Mathematics, Aarhus University, Ny Munkegade 118, DK-8000 Aarhus C, Denmark. Email: garde@math.au.dk},\; and
Arne Jensen\footnotemark[1]
}
\date{}

\begin{document}

\maketitle

\begin{abstract}
	
Let $A$ be a closed densely defined operator on a separable Hilbert space $\mathcal{H}$. Assume the resolvent set $\rho(A)$ is non-empty. For $z,z'\in\rho(A)$ let $[z,z']$ denote the straight line segment from $z$ to $z'$. For each $z\in\rho(A)$ we classify the behavior of the resolvent norm $\zeta\mapsto\norm{R_A(\zeta)}$ near $z$. Either there are $z'\in\rho(A)$, $z'\neq z$, $[z,z']\subset\rho(A)$, such that $\norm{R_A(\zeta)}\geq\norm{R_A(z)} + C\abs{\zeta-z}^\delta$ for $\zeta\in[z,z']$ with $\delta=1$ or $\delta=2$, or the function $\zeta\mapsto\norm{R_A(\zeta)}$ has a global minimum at $\zeta=z$. 

\end{abstract}

\section{Introduction and main results}

Let $\mathcal{H}$ be a separable Hilbert space and let $A$ be a closed densely defined operator on $\mathcal{H}$. Let $\rho(A)$ denote the resolvent set of $A$ and assume that $\rho(A)\neq\emptyset$. For $z\in \rho(A)$ the resolvent is denoted by $R_A(z)=(A-zI)^{-1}$. For $z,z'\in\rho(A)$ we denote by $[z,z']$ the straight line segment from $z$ to $z'$. 

For each $z\in\rho(A)$ we classify the behavior of the map $\zeta\mapsto\norm{R_A(\zeta)}$ near $z$. Either there are $z'\in\rho(A)$, $z'\neq z$, $[z,z']\subset\rho(A)$, such that 
\begin{equation*}
	\norm{R_A(\zeta)}\geq\norm{R_A(z)} + C\abs{\zeta-z}^\delta, \quad \zeta\in[z,z']
\end{equation*}
with $\delta=1$ or $\delta=2$, or the function $\zeta\mapsto\norm{R_A(\zeta)}$ has a global minimum at $\zeta=z$. In the latter case $\norm{R_A(\zeta)}$ may, or may not, equal this minimum value in an open neigborhood of $z$ in $\rho(A)$.

To state the results precisely we need the following lemma.

\begin{lemma}\label{lemma-abg}
	Let $z\in\rho(A)$. There exists a sequence $\{\psi_n\}_{n\in\bN}$ in $\cH$, such that $\norm{\psi_n}=1$, $n\in\bN$, and 
	\begin{equation}\label{R-lim}
		\lim_{n\to\infty}\norm{R_A(z)\psi_n}=\norm{R_A(z)}.
	\end{equation}
	Furthermore, the following limits exist:
	\begin{align}
		\alpha &=\lim_{n\to\infty} \ip{R_A(z)\psi_n}{R_A(z)^2\psi_n},\label{alpha}\\
		\beta &= \lim_{n\to\infty}\norm{R_A(z)^2\psi_n}^2,\label{beta}\\
		\gamma &= \lim_{n\to\infty}\ip{R_A(z)\psi_n}{R_A(z)^3\psi_n}.\label{gamma} 
	\end{align}
\end{lemma}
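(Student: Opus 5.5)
The plan is a definition of the operator norm followed by a compactness argument on bounded sequences of complex numbers. Since $z\in\rho(A)$, the resolvent $R_A(z)$ is a bounded operator on $\cH$. By the definition of the operator norm,
\[
\norm{R_A(z)}=\sup_{\norm{\psi}=1}\norm{R_A(z)\psi}.
\]
So there is a sequence $\{\phi_k\}_{k\in\bN}$ of unit vectors with $\norm{R_A(z)\phi_k}\to\norm{R_A(z)}$. This gives \eqref{R-lim} for $\{\phi_k\}$, and the same limit then holds for every subsequence.

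Next I bound the three quantities of interest, writing $R=R_A(z)$. By Cauchy--Schwarz, for unit vectors $\psi$:
\begin{align*}
\abs{\ip{R\psi}{R^2\psi}}&\le\norm{R}^3,\\
\norm{R^2\psi}^2&\le\norm{R}^4,\\
\abs{\ip{R\psi}{R^3\psi}}&\le\norm{R}^4.
\end{align*}
Hence the sequences $a_k=\ip{R\phi_k}{R^2\phi_k}\in\bC$, $b_k=\norm{R^2\phi_k}^2\in[0,\infty)$ and $c_k=\ip{R\phi_k}{R^3\phi_k}\in\bC$ are bounded.

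By Bolzano--Weierstrass, I first extract a subsequence along which $a_k$ converges. From that I extract a further subsequence along which $b_k$ converges, and from that one along which $c_k$ converges. Equivalently, $(a_k,b_k,c_k)$ is a bounded sequence in $\bC^3$, so it has a convergent subsequence. Calling this final subsequence $\{\psi_n\}_{n\in\bN}$, the limits \eqref{alpha}, \eqref{beta} and \eqref{gamma} exist, and \eqref{R-lim} still holds because it is preserved under passing to subsequences.

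There is no real obstacle here. The only care needed is to choose the subsequences successively, or all at once in $\bC^3$, so that all three limits exist simultaneously along the same sequence. Separability of $\cH$ is not needed. Note also that the limits $\alpha,\beta,\gamma$ may depend on the chosen sequence; the lemma only asserts that some sequence exists for which they converge.
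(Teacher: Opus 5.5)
Your proposal is correct and follows the paper's proof essentially verbatim: take a norming sequence of unit vectors, note that the three scalar sequences are bounded, and pass to successive subsequences so that all three limits exist while the limit in \eqref{R-lim} is preserved. Your explicit Cauchy--Schwarz bounds and the remark that separability is not needed are accurate additions the paper leaves implicit.
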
 

\begin{theorem}\label{revthm}
	Let $z\in\rho(A)$. Let $\alpha$, $\beta$, and $\gamma$ be obtained from Lemma~\ref{lemma-abg}. We then have the following results.
	\begin{itemize}
		\item[\textup{(i)}] Assume $\alpha\neq0$. For any $\theta\in (-\tfrac{\pi}{2},\tfrac{\pi}{2})$, there exist $C>0$ and $\eps_0>0$, such that for $z'$ with $\abs{z'-z}\leq\eps_0$ and $\Arg(z'-z) = \theta - \Arg\alpha$ we have
		\begin{equation}\label{result-i}
			\norm{R_A(\zeta)} \geq \norm{R_A(z)} + C\abs{\zeta-z},\quad \zeta\in[z,z'].
		\end{equation}
		\item[\textup{(ii)}] Assume $\alpha=0$ and $\gamma\neq0$. For any $\theta\in (-\tfrac{\pi}{4},\tfrac{\pi}{4})\cup(\tfrac{3\pi}{4},\tfrac{5\pi}{4})$, there exist $C>0$ and $\eps_0>0$, such that for $z'$ with $\abs{z'-z}\leq\eps_0$ and $\Arg(z'-z) = \theta - \tfrac{1}{2}\Arg\gamma$ we have
		\begin{equation}\label{result-ii}
			\norm{R_A(\zeta)} \geq \norm{R_A(z)} + C\abs{\zeta-z}^2,\quad \zeta\in[z,z'].
		\end{equation}
		\item[\textup{(iii)}] Assume $\alpha=0$, $\gamma=0$, and $\beta>0$. Then there exist $C>0$ and $\eps_0>0$ such that for $\zeta\in\rho(A)$ we have
		\begin{equation}\label{result-iii}
			\norm{R_A(\zeta)} \geq \norm{R_A(z)} + C\abs{\zeta-z}^2,\quad \abs{\zeta-z}\leq\eps_0.
		\end{equation}
		\item[\textup{(iv)}] Assume $\beta=0$. Then we have
		\begin{equation}\label{result-iv}
			\norm{R_A(\zeta)}\geq\norm{R_A(z)}\quad \text{for all $\zeta\in\rho(A)$}.
		\end{equation}
	\end{itemize} 
\end{theorem}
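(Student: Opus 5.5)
Write $R=R_A(z)$, $h=\zeta-z$. The plan is to use the first resolvent identity to expand $R_A(\zeta)$ around $z$ and to test it on the sequence $\psi_n$ from Lemma~\ref{lemma-abg}. For $\abs{h}\,\norm{R}<1$ we have the Neumann series
\begin{equation*}
R_A(\zeta)=\sum_{k\ge0}h^kR^{k+1}=R+hR^2+h^2R^3+O(\abs{h}^3),
\end{equation*}
uniformly in operator norm. Since $\norm{R_A(\zeta)}^2\geq\norm{R_A(\zeta)\psi_n}^2$, we expand
\begin{equation*}
\norm{R_A(\zeta)\psi_n}^2=\norm{R\psi_n}^2+2\re\bigl(\bar h\ip{R\psi_n}{R^2\psi_n}\bigr)+\abs{h}^2\norm{R^2\psi_n}^2+2\re\bigl(\bar h^2\ip{R\psi_n}{R^3\psi_n}\bigr)+O(\abs{h}^3),
\end{equation*}
with the $O$-term uniform in $n$. (The inner product is taken linear in the first slot; with the other convention one replaces $\bar h$ by $h$, and this is exactly where the sign $\Arg(z'-z)=\theta-\Arg\alpha$ is fixed.) Letting $n\to\infty$ gives
\begin{equation*}
\norm{R_A(\zeta)}^2\ge\norm{R}^2+2\re(\bar h\alpha)+\abs{h}^2\beta+2\re(\bar h^2\gamma)+O(\abs{h}^3).
\end{equation*}
The last step in every case is to pass from squares to norms via $\norm{R_A(\zeta)}-\norm{R}=(\norm{R_A(\zeta)}^2-\norm{R}^2)/(\norm{R_A(\zeta)}+\norm{R})$. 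For small $\abs{h}$ the denominator is bounded above by about $2\norm{R}+1$, which only changes the constant $C$.

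\textbf{Case (i).} Take $h=te^{i\phi}$ with $\phi=\theta-\Arg\alpha$. Then $\bar h\alpha=t\abs{\alpha}e^{i\theta}$ (up to the conjugation convention), so the linear term equals $2t\abs{\alpha}\cos\theta>0$. It dominates the $O(t^2)$ remainder for $t\le\eps_0$, which gives \eqref{result-i}.

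\textbf{Case (ii).} Here $\alpha=0$. With $\phi=\theta-\tfrac12\Arg\gamma$ we get $\re(\bar h^2\gamma)=t^2\abs{\gamma}\cos 2\theta>0$ for the stated $\theta$, and $\beta\ge0$. So the quadratic term is at least $2t^2\abs{\gamma}\cos2\theta$, which beats the $O(t^3)$ remainder and gives \eqref{result-ii}.

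\textbf{Case (iii).} With $\alpha=\gamma=0$ the quadratic term is $\beta\abs{h}^2$ in every direction. The remainder is uniform in direction, so a full disc works.

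\textbf{Case (iv).} Here the bound must hold globally, so the local expansion is not enough. The key observation is that $\beta=0$ forces $\norm{R}=\norm{R}_{\mathrm{ess}}$-type degeneracy; concretely $R^2\psi_n\to0$. Put $\phi_n=R\psi_n/\norm{R\psi_n}$, so that $\phi_n\in D(A)$, $\norm{\phi_n}=1$ and $R\phi_n\to0$. For any $\zeta\in\rho(A)$ the first resolvent identity gives
\begin{equation*}
R_A(\zeta)\psi_n=R\psi_n+(\zeta-z)R_A(\zeta)R^2\psi_n.
\end{equation*}
The second term tends to $0$ because $R_A(\zeta)$ is bounded. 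Hence $\norm{R_A(\zeta)}\ge\lim\norm{R_A(\zeta)\psi_n}=\norm{R}$, which is \eqref{result-iv}.

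\textbf{Main obstacle.} I expect the only delicate points to be keeping the remainder uniform in $n$, which is fine since it is controlled by $\sum_{k\ge3}\abs{h}^k\norm{R}^{k+1}$, and getting the conjugation convention right in the argument of $\alpha$ and $\gamma$.
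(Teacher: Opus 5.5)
Your argument for (i)--(iii) is the paper's argument: expand $R_A(\zeta)$ to second order with a remainder uniform in $n$, test on $\psi_n$, let $n\to\infty$, and choose the direction. Passing from squares to norms via $\norm{R_A(\zeta)}-\norm{R}=(\norm{R_A(\zeta)}^2-\norm{R}^2)/(\norm{R_A(\zeta)}+\norm{R})$, instead of the paper's Taylor expansion of $\sqrt{x^2+\xi}$, is fine and slightly cleaner.

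On the convention, you should commit to one rather than hedge. The direction formulas in the statement are correct only when the inner product is conjugate linear in the first slot, which is the paper's convention. The limiting inequality then reads $\norm{R_A(\zeta)}^2\geq\norm{R}^2+2\re(h\alpha)+\abs{h}^2\beta+2\re(h^2\gamma)+\cO(\abs{h}^3)$. With that form, $h\alpha=t\abs{\alpha}e^{i\theta}$ and $\re(h^2\gamma)=t^2\abs{\gamma}\cos2\theta$ hold literally. With the convention you declared, the choice $\phi=\theta-\Arg\alpha$ gives $\bar h\alpha=t\abs{\alpha}e^{i(2\Arg\alpha-\theta)}$, so your computation in (i) is false as written.

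The genuine error is the identity in (iv). The first resolvent identity gives $R_A(\zeta)\psi_n=R\psi_n+(\zeta-z)R_A(\zeta)R\psi_n$, with one power of $R$, not two. Already for $A=aI$ your version would force $a-z=1$. With one power, your stated reason (\emph{the second term tends to $0$ because $R_A(\zeta)$ is bounded}) is invalid, since $\norm{R\psi_n}\to\norm{R}\neq0$.

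The repair is to apply the identity once more, $R_A(\zeta)R=R^2+(\zeta-z)R_A(\zeta)R^2$, which gives
\[
R_A(\zeta)\psi_n=R\psi_n+(\zeta-z)\bigl(I+(\zeta-z)R_A(\zeta)\bigr)R^2\psi_n .
\]
This holds for every $\zeta\in\rho(A)$, with a bounded operator in front of $R^2\psi_n$. Since $\beta=0$ means exactly $R^2\psi_n\to0$, you get $\norm{R_A(\zeta)}\geq\lim_n\norm{R_A(\zeta)\psi_n}=\norm{R}$. This is precisely the paper's proof of (iv). There $I+(\zeta-z)R_A(\zeta)=(I-(\zeta-z)R)^{-1}$, and the correction is written as $\bigl[(\zeta-z)+(\zeta-z)^2R+(\zeta-z)^3(I-(\zeta-z)R)^{-1}R^2\bigr]R^2\psi_n$.

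Your observation $R\phi_n\to0$ is the right instinct here but is never used. The remark about $\norm{R}_{\mathrm{ess}}$ can be dropped.
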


Part (iv) was proved in \cite{BS}, and is included in the statement of the theorem for completeness. Note that $\beta=0$ implies $\alpha=0$ and $\gamma=0$. Thus for any $z\in\rho(A)$ one of the four possibilities for $\alpha$, $\beta$, and $\gamma$ stated in the theorem holds. The new results here are the growth estimates and the condition for an isolated minimum in part (iii). Furthermore, no assumption besides $\rho(A)\neq\emptyset$ is needed on $A$.

\begin{corollary}\label{cor}
	Assume that $A$ is a closed densely defined operator on a Hilbert space $\cH$ with non-empty resolvent set. Let $z\in\rho(A)$. Assume there exists $\psi\in\cH$, $\norm{\psi}=1$, such that 
	\begin{equation*}
		\norm{R_A(z)\psi}=\norm{R_A(z)}.
	\end{equation*}
	Then $\norm{R_A(\any)}$ is non-constant in a neighborhood of $z$, and one of the growth estimates in \textup{(i)--(iii)} holds in this neighborhood. 
\end{corollary}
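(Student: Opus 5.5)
The plan is to apply Theorem~\ref{revthm} with the constant sequence $\psi_n=\psi$, which Lemma~\ref{lemma-abg} permits. The choice $\psi_n=\psi$ satisfies \eqref{R-lim} trivially, and all the limits \eqref{alpha}--\eqref{gamma} exist. They are
\begin{equation*}
\alpha=\ip{R_A(z)\psi}{R_A(z)^2\psi},\quad \beta=\norm{R_A(z)^2\psi}^2,\quad \gamma=\ip{R_A(z)\psi}{R_A(z)^3\psi}.
\end{equation*}
Strictly speaking, Theorem~\ref{revthm} refers to ``the'' $\alpha,\beta,\gamma$ obtained from Lemma~\ref{lemma-abg}. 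I read the proof of the theorem as working for any sequence with the properties stated in the lemma, so that these particular values may be used. Granting this, the corollary reduces to showing that case (iv), $\beta=0$, cannot occur. Once (iv) is ruled out, one of (i)--(iii) holds, because $\beta=0$ is the only case left uncovered. Each of (i)--(iii) produces points $\zeta\neq z$ arbitrarily close to $z$ with $\norm{R_A(\zeta)}>\norm{R_A(z)}$. So $\norm{R_A(\any)}$ is non-constant near $z$, and the corresponding growth estimate holds there.

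To rule out $\beta=0$, I use that $R_A(z)$ is injective, being the inverse of $A-zI$. Hence $R_A(z)^2\psi=0$ would force $R_A(z)\psi=0$ and then $\psi=0$. This contradicts $\norm{\psi}=1$. More simply, $\norm{R_A(z)\psi}=\norm{R_A(z)}>0$ already gives $R_A(z)\psi\neq0$, and injectivity then gives $R_A(z)^2\psi\neq 0$. Therefore $\beta>0$.

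The only delicate point is the one noted above. If the proof of Theorem~\ref{revthm} instead uses some specific sequence built in Lemma~\ref{lemma-abg}, for example one obtained by passing to subsequences, then I need the constant sequence to be admissible there. This is immediate, since a constant sequence makes every relevant quantity converge, and any subsequence of it is the same constant sequence.
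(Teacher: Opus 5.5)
Your proof is correct and is essentially the paper's argument: take the constant sequence $\psi_n=\psi$ in Lemma~\ref{lemma-abg}, observe that $\beta=\norm{R_A(z)^2\psi}^2>0$ because $R_A(z)$ is injective, so case (iv) is excluded and one of (i)--(iii) applies. Your extra remarks on why the constant sequence is admissible and why (i)--(iii) force non-constancy only make explicit what the paper leaves implicit.
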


The question whether the resolvent norm can be constant in an open subset of $\rho(A)$ has been extensively studied in Banach spaces. We refer to~\cite{S2,DS} and references therein. In particular, in~\cite[Theorem~3.2]{S2} an example is given of a closed densely defined operator on the Hilbert space $\ell^2(\bN)$ with constant resolvent norm in a neighborhood of $z=0$. In~\cite{BS} it is shown that in case of constant resolvent norm in an open set, this constant value is a global minimum of the resolvent norm.

Several classes of operators with non-constant resolvent norm in any open subset of $\rho(A)$ have been found in complex uniformly convex Banach spaces. They include all bounded operators, generators of $C_0$-semigroups, and operators with compact resolvent, see e.g.~\cite{BGS,DS,S2,S}. For examples of operators with non-constant resolvent norm in any open subset of $\rho(A)$, but not belonging to one of the three classes mentioned, see~\cite{BS}.

In the Hilbert space case considered here, Corollary~\ref{cor} implies that if $A$ has compact resolvent then the resolvent norm is non-constant on all open subsets of $\rho(A)$. This follows since $\norm{R_A(z)}^2$ is an eigenvalue of $R_A(z)^{\ast}R_A(z)$ for all $z\in\rho(A)$. As a new result we additionally get a growth estimate for the resolvent norm.

Our results give explicit criteria for non-constant resolvent norm. The quantities $\alpha$, $\beta$, and $\gamma$ allow one to construct explicit operators satisfying one of the four conditions in the theorem, but not belonging to three general classes mentioned above. Our examples are weighted shift operators on $\ell^2(\bZ)$. All operators in the examples are unbounded and non-normal with non-compact resolvent.

Let us note that in the unpublished article \cite{CGJK} we obtained some weaker results on directional growth of the resolvent norm than those presented here.

In Section~\ref{sect2} we give the proofs of our results. Section~\ref{sect3} is devoted to examples and remarks.

\section{Proofs}\label{sect2}

\begin{proof}[Proof of Lemma~\ref{lemma-abg}]
	Let $z\in\rho(A)$. There exists a sequence $\{\phi_k\}_{k\in\bN}$ in $\cH$, such that $\norm{\phi_k}=1$, $k\in\bN$, and $\lim_{k\to\infty}\norm{R_A(z)\phi_k}=\norm{R_A(z)}$. 
	The three sequences 
	\begin{align}
		&\{\ip{R_A(z)\phi_k}{R_A(z)^2\phi_k}\}_{k\in\bN}, \label{eq:seq1} \\
		&\{\norm{R_A(z)^2\phi_k}^2\}_{k\in\bN}, \label{eq:seq2} \\
		&\{\ip{R_A(z)\phi_k}{R_A(z)^3\phi_k}\}_{k\in\bN}, \label{eq:seq3}
	\end{align}
	are bounded in $\bC$. Hence, there is a subsequence $\{\phi_{k_j}\}_{j\in\bN}$ for which \eqref{eq:seq1} converges, a subsequence $\{\phi_{{k_j}_m}\}_{m\in\bN}$ for which \eqref{eq:seq2} converges, and finally a subsequence $\{\phi_{{{k_j}_m}_n}\}_{n\in\bN}$ for which \eqref{eq:seq3} converges. The result therefore follows with $\psi_n = \phi_{{{k_j}_m}_n}$, leading to the existence of all three limits~\eqref{alpha}--\eqref{gamma}. 
\end{proof}

\begin{proof}[Proof of Theorem~\ref{revthm}]
	Let $z\in\rho(A)$ be fixed.	We use the notation 
	\begin{equation*}
		\bdz = \{ w\in\bC \mid \abs{w-z}<\delta \}.
	\end{equation*}
	Let $z\in\rho(A)$ and choose $\delta>0$ such that $\bdz\subset\rho(A)$. Iterating the first resolvent identity we obtain the expansion
	\begin{align}\label{taylor}
		R_A(\zeta) &= R_A(z)+(\zeta-z)R_A(z)^2	+ (\zeta-z)^2R_A(z)^3 \notag\\
		&\phantom{={}}+(\zeta-z)^3\bigl(I-(\zeta-z)R_A(z)\bigr)^{-1}R_A(z)^4.
	\end{align}
	The last term is of order $\cO(\abs{\zeta-z}^3)$, with a constant independent of $\zeta\in\bddz$, where $\delta'\in(0,\delta)$. We compute $R_A(\zeta)^{\ast}R_A(\zeta)$ using \eqref{taylor}:
	\begin{align*}
		R_A(\zeta)^{\ast}R_A(\zeta)&=R_A(z)^{\ast}R_A(z) + (\zeta-z)R_A(z)^{\ast}R_A(z)^2 + \overline{(\zeta-z)}(R_A(z)^{\ast})^2R_A(z) \\
		&\phantom{={}} + \abs{\zeta-z}^2(R_A(z)^{\ast})^2R_A(z)^2 + (\zeta-z)^2R_A(z)^{\ast}R_A(z)^3 \\
		&\phantom{={}} + \overline{(\zeta-z)}^2(R_A(z)^{\ast})^3R_A(z) + \cO(\abs{\zeta-z}^3).
	\end{align*}
	Let $\{\psi_n\}_{n\in\bN}$ denote a sequence obtained from Lemma~\ref{lemma-abg}, such that \eqref{alpha}--\eqref{gamma} hold. Then we have
	\begin{align}
		\norm{R_A(\zeta)}^2 &\geq \norm{R_A(\zeta)\psi_n}^2	= \ip{\psi_n}{R_A(\zeta)^{\ast}R_A(\zeta)\psi_n} \notag\\
		&= \norm{R_A(z)\psi_n}^2 \notag\\
		&\phantom{={}} + 2\re\bigl[(\zeta-z)\ip{R_A(z)\psi_n}{R_A(z)^2\psi_n}\bigr] \notag\\
		&\phantom{={}} + \abs{\zeta-z}^2\norm{R_A(z)^2\psi_n}^2 \notag\\
		&\phantom{={}} + 2\re\bigl[(\zeta-z)^2\ip{R_A(z)\psi_n}{R_A(z)^3\psi_n}\bigr] + \cO(\abs{\zeta-z}^3).\label{R-estimate}
	\end{align}
	Note that we have an explicit expression for the error term $\cO(\abs{\zeta-z}^3)$, obtained from the last term in \eqref{taylor} and some straightforward computations. Since the error term is bounded by $C\abs{\zeta-z}^3$ with $C$ independent of $n$, and the remaining terms on the right hand side of \eqref{R-estimate} converge as $n\to\infty$, then for $\zeta\in\bddz$ we have
	\begin{equation}\label{lim-estimate}
		\norm{R_A(\zeta)}^2 \geq \norm{R_A(z)}^2 + 2\re\bigl[(\zeta-z)\alpha\bigr] + \abs{\zeta-z}^2\beta + 2\re\bigl[(\zeta-z)^2\gamma\bigr] + \cO(\abs{\zeta-z}^3).
	\end{equation}
	Let $x>0$ be fixed and let $\xi$ be a small parameter. Then Taylor's theorem yields
	\begin{equation*}
		\sqrt{x^2+\xi} = x+\frac{1}{2x}\xi - \frac{1}{8x^3}\xi^2 + \cO(\xi^3).
	\end{equation*}
	Applying this expansion to \eqref{lim-estimate} we obtain
	\begin{align}
		\norm{R_A(\zeta)} &\geq \norm{R_A(z)} \notag \\
		&\phantom{={}} + \frac{1}{2\norm{R_A(z)}} \bigl[2\re\bigl((\zeta-z)\alpha\bigr)+\abs{\zeta-z}^2\beta + 2\re\bigl((\zeta-z)^2\gamma\bigr)\bigr] \notag\\
		&\phantom{={}} - \frac{1}{8\norm{R_A(z)}^3} \bigl[2\re\bigl((\zeta-z)\alpha\bigr)\bigr]^2 + \cO(\abs{\zeta-z}^3). \label{new-eq2}
	\end{align}
	
	We can now prove Theorem~\ref{revthm}(i). Assume $\alpha\neq0$. Let $\theta\in(-\tfrac{\pi}{2},\tfrac{\pi}{2})$ which implies $\cos(\theta)>0$. Let $\eps_0\in(0,\delta')$ and $\nu = \theta-\Arg \alpha$. Define $z' = z + \eps_0\e^{i\nu}$, then $[z,z']\subset\bddz\subset\rho(A)$. For $\zeta\in[z,z']$ we have $\re((\zeta-z)\alpha) = \abs{\zeta-z}\abs{\alpha}\cos(\theta)$. Choosing $\eps_0$ sufficiently small we obtain that \eqref{result-i} holds.
	
	Next we prove Theorem~\ref{revthm}(ii). Assume $\alpha=0$ and $\gamma\neq0$. Let $\theta\in(-\tfrac{\pi}{4},\tfrac{\pi}{4})\cup(\tfrac{3\pi}{4},\tfrac{5\pi}{4})$ which implies $\cos(2\theta)>0$. Let $\eps_0\in(0,\delta')$ and $\nu = \theta-\tfrac{1}{2}\Arg \gamma$. Define $z' = z + \eps_0\e^{i\nu}$, then $[z,z']\subset\bddz\subset\rho(A)$. For $\zeta\in[z,z']$ we have $\re((\zeta-z)^2\gamma) = \abs{\zeta-z}^2\abs{\gamma}\cos(2\theta)$. Since $\beta\geq0$, the estimate \eqref{result-ii} follows for a sufficiently small $\eps_0$.
	
	Assume $\alpha=0$, $\gamma=0$, and $\beta>0$. Then Theorem~\ref{revthm}(iii) follows immediately from~\eqref{new-eq2}.
	
	The result in part (iv) was proved in \cite{BS}. We give a variant of their argument here. Assume $\beta=0$ and let $\{\psi_n\}_{n\in\bN}$ be a sequence obtained from Lemma~\ref{lemma-abg}. To prove Theorem~\ref{revthm}(iv) we go back to \eqref{taylor}. Then for any $\zeta\in\rho(A)$ we have
	\begin{align*}
		\norm{R_A(\zeta)}&\geq \norm{R_A(\zeta)\psi_n} \geq\norm{R_A(z)\psi_n} -\Bigl[\abs{\zeta-z}+\abs{\zeta-z}^2\norm{R_A(z)} \\
		&\quad + \abs{\zeta-z}^3\bigl\lVert \bigl(I-(\zeta-z)R_A(z)\bigr)^{-1}R_A(z)^2\bigr \rVert \Bigr]\norm{R_A(z)^2\psi_n}.
	\end{align*}
	The estimate \eqref{result-iv} then follows by taking the limit $n\to\infty$.
\end{proof}

\begin{proof}[Proof of Corollary~\ref{cor}]
	Let $z\in\rho(A)$. Assume $\psi\in\cH$, $\norm{\psi}=1$, such that $\norm{R_A(z)\psi}=\norm{R_A(z)}$. Let $\psi_n=\psi$, $n\in\bN$, be the constant sequence such that Lemma~\ref{lemma-abg} holds. We have $\beta=\norm{R_A(z)^2\psi}$. Since $R_A(z)$ is injective, we have $\beta>0$ and the result follows.
\end{proof}

\section{Examples and remarks}\label{sect3}

Let $\cH=\ell^2(\bZ)$.  We construct examples of operators $A$ on $\cH$ satisfying the four cases in Theorem~\ref{revthm}.

We assume $0$ is in the resolvent set of $A$ and construct examples where $A$ is a weighted shift operator. We start by constructing the bounded operator $B=R_A(0)$. Let $b_k\in\bC\setminus\{0\}$, $k\in\bZ$, be a bounded sequence. We use the notation $\{\be{k}\}_{k\in\bZ}$ for the canonical basis in $\cH$, i.e.~$(\be{k})_j=\delta_{j,k}$, $j,k\in\bZ$. Define $B$ by
\begin{equation}\label{eq41}
	B\be{k}=b_{k+1}\be{k+1},\quad k\in\bZ.
\end{equation}
Then $B$ is invertible with
\begin{equation}\label{eq42}
	B^{-1}\be{k}=b_{k}^{-1}\be{k-1},\quad k\in\bZ,
\end{equation}
such that in our examples $A=B^{-1}$ is defined by \eqref{eq42}. On the maximal domain,
\begin{equation*}
	\bigl\{ x\in\cH \bigm| \sum_{k\in\bZ}\abs{b_{k}}^{-2}\abs{x_k}^2<\infty \bigr\},
\end{equation*}
the operator $A$ is a closed densely defined operator. We have
\begin{equation*}
	B^{\ast}\be{k}=\overline{b}_{k}\be{k-1},\quad k\in\bZ.
\end{equation*}
Simple computations yield the results
\begin{equation}
	(B^{\ast}B)\be{k}=\abs{b_{k+1}}^2\be{k} \quad\text{and}\quad (BB^{\ast})\be{k}=\abs{b_{k}}^2\be{k},\label{eq44}
\end{equation}
for $k\in\bZ$. Thus $B$ is normal if and only if $\{\abs{b_k}\}_{k\in\bZ}$ is a constant sequence. Furthermore, we have 
\begin{equation*}
	\norm{B}=\sup_{k\in\bZ}\abs{b_k}.
\end{equation*}
In order to give the examples, we need the following results. The straightforward verifications are omitted. For $k\in\bZ$ we have
\begin{align}
	B^{\ast}B^2\be{k}&=b_{k+1}\abs{b_{k+2}}^2\be{k+1},\label{eq47} \\
	B^{\ast}B^3\be{k}&=b_{k+1}b_{k+2}\abs{b_{k+3}}^2\be{k+2},\label{eq48} \\
	(B^{\ast})^2B^2\be{k}&=\abs{b_{k+1}}^2\abs{b_{k+2}}^2\be{k}.\label{eq49}
\end{align}

In the examples below, we make different choices for the bounded sequence $\{b_k\}_{k\in\bZ}$ of non-zero complex numbers and a sequence $\{\psi_k\}_{k\in\bZ}$ of unit vectors in $\cH$, to satisfy the conditions in each of the four cases in Theorem~\ref{revthm}. In each example we will have $\liminf_{\abs{k}\to\infty}\abs{b_k}=0$ and $\limsup_{\abs{k}\to\infty}\abs{b_k}>0$ such that $A$ is non-normal, unbounded, and has non-compact resolvent.

\begin{example}\label{ex43}
	Assume $c\neq0$ and let the non-zero sequence $\{b_k\}_{k\in\bZ}$ satisfy
	\begin{equation*}
		\sup_{k\in\bZ}\abs{b_k}\leq\abs{c}, \quad \lim_{\abs{k}\to\infty}b_{3k}=\lim_{\abs{k}\to\infty}b_{3k+1}=c, \quad\text{and}\quad \lim_{\abs{k}\to\infty}b_{3k+2}=0.
	\end{equation*}
	Define $\psi_k=2^{-1/2}(\be{3k-1}+\be{3k})$ such that $\norm{\psi_k}=1$. We have $\norm{B}\leq\abs{c}$ and
	\begin{equation*}
		\norm{B\psi_k}^2=\ip{\psi_k}{B^{\ast}B\psi_k}=\tfrac{1}{2}(\abs{b_{3k}}^2 + \abs{b_{3k+1}}^2).
	\end{equation*}
	The assumptions on $\{b_k\}_{k\in\bZ}$ imply that \eqref{R-lim} holds. Using \eqref{eq47} we find
	\begin{equation*}
		\ip{\psi_k}{B^{\ast}B^2\psi_k}=\tfrac{1}{2}	b_{3k}\abs{b_{3k+1}}^2,
	\end{equation*}
	such that by \eqref{alpha}
	\begin{equation*}
		\alpha=\lim_{\abs{k}\to\infty} \tfrac{1}{2}b_{3k}\abs{b_{3k+1}}^2=\tfrac{1}{2}c\abs{c}^2\neq0.
	\end{equation*}
	By Theorem~\ref{revthm}(i), for any $\theta\in(-\tfrac{\pi}{2},\tfrac{\pi}{2})$ there exists a $z'$ such that \eqref{result-i} holds with $z=0$ and $\Arg(z')=\theta-\Arg c$.
\end{example}

\begin{example}\label{ex44}
	Assume $c\neq0$ and let the non-zero sequence $\{b_k\}_{k\in\bZ}$ satisfy
	\begin{equation*}
		\sup_{k\in\bZ}\abs{b_k}\leq\abs{c}, \quad \lim_{\abs{k}\to\infty}b_{4k+1}=\lim_{\abs{k}\to\infty}b_{4k+2} = \lim_{\abs{k}\to\infty}b_{4k+3} = c, \quad\text{and}\quad \lim_{\abs{k}\to\infty}b_{4k}=0.
	\end{equation*}
	Define $\psi_k=2^{-1/2}(\be{4k}+\be{4k+2})$ such that $\norm{\psi_k}=1$. We have $\norm{B}\leq\abs{c}$ and
	\begin{equation*}
		\norm{B\psi_k}^2=\ip{\psi_k}{B^{\ast}B\psi_k}=\tfrac{1}{2}(\abs{b_{4k+1}}^2 + \abs{b_{4k+3}}^2).
	\end{equation*}
	The assumptions on $\{b_k\}_{k\in\bZ}$ imply that \eqref{R-lim} holds. Using \eqref{eq47} we find
	\begin{equation*}
		\ip{\psi_k}{B^{\ast}B^2\psi_k}=0,
	\end{equation*}
	such that by \eqref{alpha} we have $\alpha=0$. Using \eqref{eq48} we get
	\begin{equation*}
		\ip{\psi_k}{B^{\ast}{B^3}\psi_k}=\tfrac{1}{2} b_{4k+1}b_{4k+2}\abs{b_{4k+3}}^2
	\end{equation*}
	such that 
	\begin{equation*}
		\gamma=\lim_{\abs{k}\to\infty}	\tfrac{1}{2} b_{4k+1}b_{4k+2}\abs{b_{4k+3}}^2 = \tfrac{1}{2}c^2\abs{c}^2\neq 0.
	\end{equation*}
	By Theorem~\ref{revthm}(ii), for any $\theta\in(-\tfrac{\pi}{4},\tfrac{\pi}{4})\cup(\tfrac{3\pi}{4},\tfrac{5\pi}{4})$ there exists a $z'$ such that \eqref{result-ii} holds with $z=0$ and $\Arg(z')=\theta-\Arg c$.
\end{example}

\begin{example}\label{ex41}
	Assume that the non-zero sequence $\{b_k\}_{k\in\bZ}$ satisfies
	\begin{equation*}
		\abs{b_0}\geq\sup_{k\neq0}\abs{b_k}, \quad \limsup_{\abs{k}\to\infty}\abs{b_k}>0, \quad\text{and}\quad \liminf_{\abs{k}\to\infty}\abs{b_k}=0.
	\end{equation*}
	It follows from \eqref{eq44} that $\norm{B}=\abs{b_0}$, and from \eqref{eq41} that $\norm{B\be{-1}}=\norm{B}$. Thus we can use Lemma~\ref{lemma-abg} and Theorem~\ref{revthm} with the constant sequence
	$\psi_k=\be{-1}$. Then \eqref{eq47} and \eqref{eq48} show that $\alpha=0$ and $\gamma=0$. From \eqref{beta} and \eqref{eq49} follows 
	\begin{equation*}
		\beta=\norm{B^2\be{-1}}^2=\abs{b_0b_1}^2>0.
	\end{equation*}
	Thus Theorem~\ref{revthm}(iii) shows that $\zeta\mapsto\norm{R_A(\zeta)}$ has an isolated local minimum at $\zeta=0$. 
\end{example}

\begin{example}
	Assume that the non-zero sequence $\{b_k\}_{k\in\bZ}$ satisfies
	\begin{equation*}
		0<b=\sup_{k\in\bZ}\abs{b_k}, \quad b>\abs{b_k} \text{ for all } k\in\bZ, \quad \lim_{\abs{k}\to\infty}\abs{b_{2k}} = b, \quad\text{and}\quad \lim_{\abs{k}\to\infty}b_{2k+1} = 0.
	\end{equation*}
	Let $\psi_k=\be{2k-1}$, $k\in\bZ$. Then \eqref{R-lim} holds for this sequence. A short computation shows that $\alpha=\gamma=0$. We have
	\begin{equation*}
		\beta=\lim_{\abs{k}\to\infty}\norm{B^2\be{2k-1}}^2 = \lim_{\abs{k}\to\infty}\abs{b_{2k}}^2\abs{b_{2k+1}}^2=0.
	\end{equation*}
	Thus Theorem~\ref{revthm}(iv) implies that $\zeta\mapsto\norm{R_A(\zeta)}$ has a global minimum at $\zeta=0$. We know from general results that $A$ cannot be a bounded operator. In this example it follows explicitly from $\liminf_{\abs{k}\to\infty}\abs{b_k}=0$ that $A$ is unbounded.
\end{example}

\begin{remark}\label{rem30}
	Let $A$ be a bounded operator on $\cH$ and let $z\in\rho(A)$. Let $\beta$ be obtained from Lemma~\ref{lemma-abg} with the sequence $\{\psi_n\}_{n\in\bN}$. Assume $\beta=0$. Then
	\begin{equation*}
		\lim_{n\to\infty}\norm{R_A(z)\psi_n} = \lim_{n\to\infty}\norm{(A-zI)R_A(z)^2\psi_n} = 0,
	\end{equation*}
	since $A$ is bounded. But this result contradicts \eqref{R-lim}. Thus we get a different proof of the result that the resolvent norm of a bounded operator is non-constant in any open subset of $\rho(A)$. See e.g.~\cite{BS} for the standard argument.
\end{remark}

\begin{remark}\label{rem31}
	Let $A$ be a closed densely defined operator on $\cH$. Let $z\in\rho(A)$ and assume that $\norm{R_A(z)}^2$ is an eigenvalue of the operator $R_A(z)^{\ast}R_A(z)$. Let $\psi$ be a normalized eigenvector. Then Corollary~\ref{cor} holds. In particular, if $R_A(z_0)$ is compact for some $z_0\in\rho(A)$, then Corollary~\ref{cor} holds at any $z\in\rho(A)$. Thus in the Hilbert space case we give a different proof of the result from \cite{DS} that an operator with compact resolvent cannot have constant resolvent norm in any open subset of $\rho(A)$, and add the result that we have a minimal growth rate in some directions.
\end{remark}

\begin{remark}\label{rem32}
	Assume that $A$ is a normal operator on $\cH$. Then \eqref{result-i}  holds for all $z\in\rho(A)$. This can be seen directly from the following explicit estimate. We have
	\begin{equation*}
		\norm{R_A(z)}=\frac{1}{\dist(z,\sigma(A))}.
	\end{equation*}
	Let $z_0\in\sigma(A)$ such that 
	$\dist(z,\sigma(A))=\abs{z-z_0}$. Let $z'=(z+z_0)/2$. Then for $\zeta\in[z,z']$ we have $|\zeta-z_0|=|z-z_0|-|\zeta-z|$ and
	\begin{align*}
		\norm{R_A(\zeta)} &\geq \frac{1}{\abs{\zeta-z_0}} = \frac{1}{\abs{z-z_0}-\abs{\zeta-z}} = \frac{\abs{z-z_0} + \abs{\zeta-z}}{\abs{z-z_0}^2-\abs{\zeta-z}^2} \notag\\
		&\geq \frac{\abs{z-z_0} + \abs{\zeta-z}}{\abs{z-z_0}^2} = \norm{R_A(z)}+\frac{1}{\dist(z,\sigma(A))^2}\abs{\zeta-z}. 
	\end{align*}
\end{remark}

\subsubsection*{Acknowledgements} 

We thank two referees for comments and suggestions that lead to the present version of our results. We also thank one of them for pointing out \cite{BS} to us.

HG is supported by grant 10.46540/3120-00003B from Independent Research Fund Denmark.


\begin{thebibliography}{0}

\setlength{\itemsep}{-2mm} \footnotesize

\bibitem{BS} Bögli, S., and Siegl, P., Remarks on the convergence of pseudospectra, \emph{Integr. Equ. Oper. Theory} 80 (2014), 303--321.

\bibitem{BGS} Böttcher, A., Grudsky, S. M., and Silbermann, B., Norms of inverses, spectra, and pseudospectra of large truncated Wiener-Hopf operators and Toeplitz matrices. 
\emph{New York J. Math.} 3 (1997), 1--31.

\bibitem{CGJK}
Cornean, H., Garde, H., Jensen, A., and Knörr, H.~K., A local directional growth estimate of the resolvent norm, arXiv:1808.04419~[math.SP]. Unpublished.

\bibitem{DS} Davies, E.~B., and Shargorodsky, E., 
{Level sets of the resolvent norm of a linear operator revisited}. 
\emph{Mathematika} 62 (2016), no. 1, 243--265.

\bibitem{S2} 
Shargorodsky, E.,
{On the level sets of the resolvent norm of a linear operator}.
\emph{Bull. London Math. Soc.} 40 (2008), 493--504.

\bibitem{S} 
Shargorodsky, E.,
{On the definition of pseudospectra}. 
\emph{Bull. London Math. Soc.} 41 (2009), no. 3, 524--534.

\end{thebibliography}
\end{document}